\theoremstyle{plain}
\newtheorem{theorem}{Theorem}[section]
\newtheorem{lemma}[theorem]{Lemma}
\newtheorem{theoremA}{Theorem}
\newtheorem{corollaryB}[theoremA]{Corollary}
\theoremstyle{definition}
\newtheorem{definition}{Definition}[section]
\theoremstyle{remark}
\newtheorem{remark}{Remark}[section]
\newcommand{\RCD}{\mathsf{RCD}}
\newcommand{\CD}{\mathsf{CD}}
\newcommand{\CDloc}{\protect{\mathsf{CD}_\text{loc}}}
\renewcommand{\d}{\mathsf{d}}
\newcommand{\m}{\mathfrak{m}}
\newcommand{\Ric}{\mathrm{Ric}}
\newcommand{\ov}{\overline}
\newcommand{\R}{\mathbb{R}}
\newcommand{\N}{\mathbb{N}}
\renewcommand{\P}{\mathcal{P}}
\newcommand{\F}{\mathcal{F}}
\newcommand{\s}{\mathfrak{s}}
\DeclareMathOperator{\Geo}{Geo}
\DeclareMathOperator{\OptGeo}{OptGeo}
\DeclareMathOperator{\supp}{supp}
\title[$\CD$ spaces nonnegatively curved outside a compact set]{On $\CD$ spaces with nonnegative curvature outside a compact set}
\author{Mauricio Che}
\address[M. Che]{Durham University, Durham, United Kingdom}
\email{\href{mailto:mauricio.a.che-moguel@durham.ac.uk}{mauricio.a.che-moguel@durham.ac.uk}}   
\author{Jes\'us N\'u\~{n}ez-Zimbr\'on}
\address[J. N\'u\~{n}ez-Zimbr\'on]{Centro de Investigaci\'on en Matem\'aticas, Guanajuato, Mexico}
\email{\href{mailto:jesus.nunez@cimat.mx}{jesus.nunez@cimat.mx}}
\thanks{M. Che was partially supported by CONACYT-Doctoral scholarship no. 769708}
\keywords{$\CD$ spaces, non-branching spaces, ball covering property, number of ends}
\date{\today}
\subjclass[2010]{53C23, 53C21}
\begin{document}
\begin{abstract}
In this paper we adapt work of Z.-D. Liu to prove a ball covering property for non-branching $\CD$ spaces with nonnegative curvature outside a compact set. As a consequence we obtain uniform bounds on the number of ends of such spaces.
\end{abstract}

\maketitle

\section{Introduction}
In \cite{liu1,liu2}, Z.-D. Liu proved that Riemannian manifolds with nonnegative Ricci curvature outside a compact set satisfy the following \emph{ball covering property}. In the following we denote the metric ball of radius $r$ centered at $p\in M$ by $B_r(p)$ and the closed metric ball with the same radius and center by $\ov{B}_{r}(p)$. 

\begin{theorem}\label{thm:liu}
Let $M^n$ be a complete Riemannian manifold with nonnegative Ricci curvature outside a compact set $B$. Assume that $\Ric_M \geq (n-1)H$ and that  $B\subset B_{D_0}(p_0)$ for some $p_0\in M$ and $D_0>0$. Then for any $\mu>0$ there exists $C=C(n,HD_0^2,\mu)>0$ such that for any $r > 0$, the following property is satisfied:  If $S \subset \ov{B}_{r}(p_0)$, there exist $p_1,\dots, p_k \in S$ with $k \leq C$ such that
\[
S\subset \bigcup_{j=1}^{k}B_{\mu\cdot r}(p_j). 
\]
\end{theorem}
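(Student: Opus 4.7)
The plan is the classical greedy packing argument, with the number of packed balls bounded by two applications of Bishop--Gromov volume comparison. By Zorn's lemma, pick a maximal $\mu r$-separated subset $\{p_1,\ldots,p_k\}\subset S$. Maximality yields $S\subset\bigcup_{j=1}^{k}B_{\mu r}(p_j)$, and separation makes $\{B_{\mu r/2}(p_j)\}_{j=1}^{k}$ pairwise disjoint. The task reduces to bounding $k$ by a constant depending only on $n$, $HD_0^2$, and $\mu$.

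I would split by scale using a threshold $r\sim D_0$. In the small-scale regime $r\le\lambda D_0$, with $\lambda=\lambda(\mu)$ to be chosen, the quantity $Hr^2$ is controlled by $HD_0^2$, so relative Bishop--Gromov around each $p_j$ with the global bound $\Ric\ge(n-1)H$, comparing radii $\mu r/2$ and $3r$, gives
\[
\frac{\mathrm{vol}(B_{\mu r/2}(p_j))}{\mathrm{vol}(B_{3r}(p_j))}\ge c(n,HD_0^2,\mu),
\]
and summing over the disjoint balls contained in $B_{3r}(p_0)$ bounds $k$. In the large-scale regime $r>\lambda D_0$, I would partition the centers into a far group $I_{\text{far}}=\{p_j:\d(p_j,B)>\mu r/2\}$ and a near group $I_{\text{near}}$ (its complement). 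For $p_j\in I_{\text{far}}$, Bishop--Gromov with $K=0$ applied on radii up to $\d(p_j,B)$ gives the Euclidean lower bound $\mathrm{vol}(B_{\mu r/2}(p_j))\ge c(n)(\mu r/2)^n$; combined with an upper bound on $\mathrm{vol}(B_{(1+\mu/2)r}(p_0))$ of the form $C(n,HD_0^2)\,r^n$, this bounds $|I_{\text{far}}|$. For $p_j\in I_{\text{near}}$, the $\mu r$-separation together with the same type of $r^n$ upper bound on the volume of the $\mu r/2$-tube around $B$ controls $|I_{\text{near}}|$.

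The main obstacle is producing these linear-in-$r^n$ upper volume bounds. A direct application of Bishop--Gromov with $\Ric\ge(n-1)H$ to the full ball $B_{(1+\mu/2)r}(p_0)$ would contaminate the constant with $Hr^2$, which is unacceptable. The resolution, and the technical core of Liu's work, is to use nonnegative Ricci outside $B$ to propagate the volume estimate from scale $D_0$, where the global bound is applied cleanly, outward to scale $r$ without any further $r$-dependence. Tracking constants so that only the dimensionless combination $HD_0^2$ survives in the final bound is the nontrivial part of the argument.
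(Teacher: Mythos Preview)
Your packing setup and the small-scale case are fine and match Liu's argument. The gap is in the large-scale regime. The assertion that Bishop--Gromov with $K=0$ ``applied on radii up to $\d(p_j,B)$'' yields an absolute Euclidean lower bound $\mathrm{vol}(B_{\mu r/2}(p_j))\ge c(n)(\mu r/2)^n$ is false: for $\Ric\ge 0$ Bishop's inequality gives only the \emph{upper} bound $\mathrm{vol}(B_\rho)\le\omega_n\rho^n$, and the ratio monotonicity goes the wrong way for what you want. There is no universal lower volume bound without a non-collapsing hypothesis (a thin flat cylinder already shows this). What the packing argument actually needs is a lower bound on the ratio $\mathrm{vol}(B_{\mu r/2}(p_j))/\mathrm{vol}(B_{(1+\mu/2)r}(p_0))$, and to extract that from $K=0$ comparison you would have to run Bishop--Gromov around $p_j$ out to radius $\sim 2r$; that ball necessarily meets $B$, so the $K=0$ hypothesis is unavailable. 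The far/near dichotomy does not get around this, and the same missing lower bound obstructs your near-group count as well.

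The device Liu uses---and which the paper reproduces in the $\CD$ setting---is different: a \emph{cone decomposition}. One covers $\partial B_{\lambda D}(p_0)$ by $m=m(n,HD_0^2)$ pieces $U_a$ of diameter $<2D$ (this is the only place the global bound $\Ric\ge(n-1)H$ is used, and it is at scale $\sim D_0$, so only $HD_0^2$ enters), and sets $K_a$ to be the union of minimal geodesics from $p_0$ through $U_a$. The key geometric fact is that any geodesic joining two points of the same cone $K_a$ lying outside $B_{\lambda D}(p_0)$ avoids $B_{\lambda D/2}(p_0)$. Each packed center $p_j$ is assigned to the cone carrying the largest share of $\m(B_{\mu r/2}(p_j))$; then, within a fixed cone, one builds a star-shaped set $W_q$ containing all the relevant pieces and lying entirely in the $\Ric\ge 0$ region, and applies a Bishop--Gromov inequality for star-shaped sets with $K=0$ (Theorem~\ref{thm:bishop-gromov} in the paper). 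This yields $\#\mathcal F_a\le C(n,\mu)\,m$, and summing over the $m$ cones bounds $k$. The cone structure is precisely what makes the $K=0$ relative comparison legitimate at scale $r$; the absolute upper bound $\mathrm{vol}(B_r(p_0))\le C\,r^n$ that you flag as the ``technical core'' is neither proved nor needed.
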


We state and prove this result in the more general context of non-branching metric measure spaces satisfying the curvature-dimension condition introduced by Lott-Sturm-Villani \cite{lott-villani,sturm1,sturm2} (see the section on preliminaries below for the definitions). This class of spaces contains the class of $\RCD$ spaces, as it was recently shown that these are non-branching (see \cite[Theorem 1.3]{deng2020holder}), so, a fortiori, it also includes Alexandrov spaces \cite{Petrunin2011, Zhang-Zhu2010} and weighted Riemannian manifolds. More precisely, we prove the following theorem.

\begin{theoremA}\label{thm:principal}
Let $(X,\d,\m)$ be a non-branching metric measure space satisfying the $\CD(K,N)$ condition for some $N> 1$ and $K\in \R$. Assume that $B$ is a compact subset of $X$ with $B\subset B_{D_0}(p_0)$ for some $p_0\in M$ and $D_0>0$ and such that the $\CDloc(0,N)$ condition is satisfied on $X\setminus B$ (see Definition \ref{def:cd outside B}). Then for any $\mu > 0$, there exists $C = C(N, KD_0^2,\mu) > 0$ such that for any $r > 0$, the following property is satisfied: If $S \subset \ov{B}_{r}(p_0)$, there exist $p_1,\dots, p_k \in S$ with $k \leq C$ and such that
\[
S\subset \bigcup_{j=1}^{k}B_{\mu\cdot r}(p_j). 
\]
\end{theoremA}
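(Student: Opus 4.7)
The plan is to adapt Liu's argument in \cite{liu1} to the synthetic setting, using the Bishop--Gromov volume comparison for $\CD(K,N)$ in the region close to $B$ and a scale-invariant Bishop--Gromov for $\CD(0,N)$ (supplied by the $\CDloc(0,N)$ hypothesis on $X \setminus B$ together with the non-branching assumption and the local-to-global principle for the curvature-dimension condition) away from $B$. Fix $\mu > 0$ and let $\{p_1, \dots, p_k\} \subset S$ be a maximal subset with pairwise distances at least $\mu r$; by maximality $S \subset \bigcup_i B_{\mu r}(p_i)$, so it suffices to bound $k$, and the balls $B_{\mu r/2}(p_i)$ are pairwise disjoint, making the problem amenable to a volume-comparison packing argument.

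If $r \leq (4/\mu) D_0$, all radii involved are controlled by $D_0$ times a function of $\mu$, and the global $\CD(K,N)$ Bishop--Gromov inequality applied at a minimiser $p_{i^*}$ of $\m(B_{\mu r/2}(\cdot))$ yields
\[
k \leq \frac{\m(B_{2r+\mu r/2}(p_{i^*}))}{\m(B_{\mu r/2}(p_{i^*}))} \leq \frac{v_{K,N}(2r+\mu r/2)}{v_{K,N}(\mu r/2)},
\]
where $v_{K,N}$ is the model ball volume; this quantity is controlled by $N$, $KD_0^2$, and $\mu$.

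When $r > (4/\mu) D_0$, I would split the indices into $I_1 = \{i : \d(p_i, p_0) \leq 2D_0\}$ and $I_2 = \{i : \d(p_i, p_0) > 2D_0\}$. Since $\mu r > 4D_0$ exceeds the diameter of $\ov{B}_{2D_0}(p_0)$, the $\mu r$-separation forces $|I_1| \leq 1$. For indices in $I_2$ one has $\d(p_i, B) > D_0$, so balls around $p_i$ lying in $X \setminus B$ enjoy a $\CD(0,N)$-type Bishop--Gromov estimate via the $\CDloc(0,N)$ hypothesis combined with non-branching and local-to-global. The analogous scale-invariant packing argument (with $v_{0,N}$ in place of $v_{K,N}$) then bounds $|I_2|$ by a constant depending only on $N$ and $\mu$, and combining the two bounds gives the theorem.

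The main obstacle is the $\CD(0,N)$ Bishop--Gromov step in the far case: the packing scale $\mu r/2$ may exceed $\d(p_i, B)$, so the local volume comparison around $p_i$ does not reach directly at that scale. I would address this by a dyadic/stitching argument, covering annular shells $\ov{B}_{2^{-j}r}(p_0) \setminus \ov{B}_{2^{-j-1}r}(p_0)$ away from $B$ individually, using $\CD(0,N)$ Bishop--Gromov at the appropriate (comparable) scale in each shell, and absorbing the innermost shell near $B$ using the $\CD(K,N)$ bound on a neighbourhood of $B$ of size $\sim D_0$. The depth of this dyadic decomposition is bounded by $\log_2(1/\mu)$, so the final constant depends only on $N$, $KD_0^2$, and $\mu$; making this bookkeeping precise and verifying that the chosen Bishop--Gromov estimates can indeed be invoked at each chosen scale is the most delicate part of the argument.
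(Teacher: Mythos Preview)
Your setup, the small-$r$ case, and the bound $|I_1|\le 1$ are correct and essentially match the paper. The gap is in the large-$r$ case, and it is structural rather than bookkeeping. You diagnose the obstacle as ``the packing scale $\mu r/2$ may exceed $\d(p_i,B)$'', but that is not the real difficulty. Even when the disjoint balls $B_\rho(p_i)$ are chosen small enough to lie in $X\setminus B$, the packing bound needs a single comparison region around some $p_{i^*}$ that contains all of them, together with a $K=0$ Bishop--Gromov ratio for that region. In any annular shell $\ov{B}_{2^{-j}r}(p_0)\setminus \ov{B}_{2^{-j-1}r}(p_0)$ two of the $p_i$ can lie on opposite sides of $p_0$, so the geodesics from $p_{i^*}$ to the other $B_\rho(p_i)$ may pass through $B$; the star-shaped $K=0$ comparison (Theorem~\ref{thm:bishop-gromov}) is then unavailable regardless of how small you take $\rho$. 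Refining the radial scale does nothing about this, because every shell still has full angular extent. Your proposed dyadic stitching therefore does not yield a bound on the number of points in a single shell.

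What the paper supplies is an \emph{angular} decomposition in place of your radial one. One covers $\partial B_{\lambda D}(p_0)$ (with $\lambda>2$) by $m$ pieces $U_a$ of diameter $<2D$, where $m$ is bounded via the global $\CD(K,N)$ Bishop--Gromov at the fixed scale $\sim D$; then $K_a$ is the cone of minimal geodesics from $p_0$ through $U_a$. A short triangle-inequality computation shows that any geodesic joining two points of $K_a\setminus B_{\lambda D}(p_0)$ stays outside $B_{\lambda D/2}(p_0)\supset B$. Each $p_j$ is assigned to the cone in which $B_{\mu r/8}(p_j)$ has most mass; within a fixed cone one uses a pigeonhole step (Lemma~\ref{lem:delta}) to locate a good centre $q$ and then applies the $K=0$ star-shaped Bishop--Gromov to the star-shaped hull of all the pieces in that cone, giving $\#\{p_j\text{ in cone }a\}\le C(N,\mu)\,m$. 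Summing over the $m$ cones gives the bound. Without this cone device, or some equivalent mechanism forcing the relevant geodesics to avoid $B$, the $K=0$ comparison cannot be invoked at the scale $\sim r$ that your packing argument requires.
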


The proof follows the arguments of \cite{liu1,liu2} almost verbatim, albeit with some needed adaptations to account for the more general hypotheses. The main tools we need are a version of the local-to-global theorem for the $\mathsf{CD}$ condition (see Lemma \ref{lem:key lemma}) and a Bishop-Gromov inequality for certain star-shaped sets (see Theorem \ref{thm:bishop-gromov}). In general one can prove that $\CD(K,N)$ spaces support a Bishop-Gromov inequality for star-shaped sets following the proof of \cite[Theorem 2.3]{sturm2}, just as is done in \cite[Proposition 3.5]{cavalleti-mondino-lorentz} to get a timelike Bishop-Gromov inequality in the context of Lorentzian synthetic spaces.  However, it is important to notice that Lemma \ref{thm:bishop-gromov} is not a direct consequence of this fact. Namely, since the $\CD(K,N)$ condition implies a Bishop-Gromov inequality  with parameters $K,N$ and we are interested in the corresponding inequality with parameters $0, N$, we need to follow the original proof of the Bishop-Gromov inequality in \cite{sturm2} and make sure that all optimal transports involved remain in the region where $\CDloc(0,N)$ holds.

Finally, a direct consequence is that spaces satisfying the hypotheses of Theorem \ref{thm:principal} have a uniformly bounded number of ends (see Definition \ref{def:ends}). 

\begin{corollaryB}\label{cor:principal}
Let $(X,\d,\m)$ be a metric measure space satisfying the $\CD(K,N)$ with $N\geq 1$ and $K\in \R$. Assume that $B$ is a compact subset of $X$ with $B\subset B_{D_0}(p_0)$ for some $p_0\in M$ and $D_0>0$ and such that the $\CDloc(0,N)$ condition is satisfied on $X\setminus B$. Then there exists $C=C(N,KD_0^2)>0$ such that $(X,\d)$ has at most $C$ ends.
\end{corollaryB}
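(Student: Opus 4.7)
The plan is to bound, uniformly in $R > D_0$, the number of unbounded connected components of $X \setminus \ov{B}_R(p_0)$, and then invoke the standard characterization of the number of ends as the supremum of this quantity over such $R$. Fix $R > D_0$ so that $B \subset B_R(p_0)$, and let $E_1, \dots, E_k$ denote the distinct unbounded connected components of the open set $X \setminus \ov{B}_R(p_0)$. For each $i$ I would select a point $p_i \in E_i \cap \partial B_{2R}(p_0)$. Such a point exists because $E_i$ is open, connected, unbounded, and its boundary in $X$ is contained in $\ov{B}_R(p_0)$; hence the continuous function $x \mapsto \d(p_0, x)$ maps $E_i$ onto the interval $(R, \infty)$, which contains $2R$.

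Next I would establish the pairwise separation $\d(p_i, p_j) \geq 2R$ for $i \neq j$. Since $p_i$ and $p_j$ lie in distinct connected components of the open set $X \setminus \ov{B}_R(p_0)$, any minimizing geodesic between them (which exists because $X$ is a $\CD$, hence geodesic, space) must meet $\ov{B}_R(p_0)$ at some point $q$. By the triangle inequality $\d(p_i, q) \geq 2R - R = R$ and similarly $\d(q, p_j) \geq R$, so
\[
\d(p_i, p_j) \;=\; \d(p_i, q) + \d(q, p_j) \;\geq\; 2R.
\]

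Finally I would apply Theorem \ref{thm:principal} to $S := \{p_1, \dots, p_k\} \subset \ov{B}_{2R}(p_0)$ with parameters $r = 2R$ and $\mu = 1/3$, obtaining indices $i_1, \dots, i_\ell$ with $\ell \leq C(N, KD_0^2, 1/3)$ such that $S \subset \bigcup_{s=1}^\ell B_{2R/3}(p_{i_s})$. Each of these covering balls has diameter strictly less than $2R$ and therefore contains at most one element of $S$; consequently $k \leq \ell \leq C(N, KD_0^2, 1/3) =: C(N, KD_0^2)$, a bound independent of $R$, which then bounds the number of ends. The only real subtlety is the existence assertion in the first step---verifying that each unbounded component touches $\partial B_{2R}(p_0)$ via the boundary-of-component argument sketched above---after which the proof reduces to a clean packaging of Theorem \ref{thm:principal}.
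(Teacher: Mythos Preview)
Your proposal is correct and follows essentially the same approach as the paper: both pick representative points on the sphere of radius $2R$ in each unbounded component of $X\setminus\ov{B}_R(p_0)$ and invoke Theorem~\ref{thm:principal} to bound their number. The only differences are cosmetic---you make the separation estimate $\d(p_i,p_j)\geq 2R$ explicit and take $\mu=1/3$, whereas the paper uses $\mu=1/2$ and leaves the separation implicit via a connectedness observation.
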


In \cite{cai}  a result bounding the number of ends of manifolds with nonnegative Ricci curvature outside a compact set was obtained using different techniques. This argument was extended in \cite{jywu} to the case of  smooth metric measure spaces with nonnegative Brakry-\'{E}mery Ricci curvature outside of a compact set. A related result bounding the number of ends of Alexandrov spaces with nonnegative sectional curvature outside a compact set was obtained in \cite{koh}. More recently, a result bounding the number of ends of $\RCD(0,N)$ spaces was obtained in \cite{gigli2021monotonicity}.\\

\textbf{Acknowledgements.} The authors wish to thank Fabio Cavalletti, Fernando Galaz-Garc\'{i}a, Nicola Gigli and Guofang Wei for very useful communications.

\section{Preliminaries}
\label{sec:preliminaries}
In this section we provide a brief overview of the definitions and results we will need to prove Theorem \ref{thm:principal}. Throughout the article we consider complete and geodesic metric measure spaces $(X,\d,\m)$ such that $\m$ is finite on bounded sets and $\supp(\m)=X$. We begin by recalling the definition of the so-called Wasserstein space. 

\begin{definition}
Let $\P(X,\d)$ be the set of Borel probability measures on $X$ and $\P_2(X,\d,\m)\subset \P(X,\d)$ the space of those probability measures that are absolutely continuous with respect to $\m$ and have finite second moment, i.e. for some (and therefore for any) $x_0\in X$ the following holds:
\[
\int d^2(x,x_0)\ d\m(x) < \infty.
\]
This set $\P_2(X,\d,\m)$ is endowed with the $2$-Wasserstein metric
\[
W_2(\mu_0,\mu_1) = \inf \left(\int_{X\times X} \d^2(x_0,x_1)\ d\pi(x_0,x_1)\right)^{1/2}
\] where the infimum is taken over all couplings $\pi \in\P(X\times X)$ from $\mu_0$ to $\mu_1$, i.e. probability measures on $X\times X$ having first and second marginals equal to $\mu_0$ and $\mu_1$ respectively.
\end{definition}

\begin{remark}
It turns out that $(\P_2(X),W_2)$ is also a complete separable geodesic space. Moreover, in this case, the distance $W_2(\mu_0, \mu_1)$ can be characterized as
\[
W_2^2 (\mu_0,\mu_1) = \min_\pi \int \int^1_0 |\gamma_t|^2\ dt\ d\pi(\gamma),
\]
where the minimum is taken among all $\pi \in \P(C([0, 1], X))$ such that $(e_i)_\#\pi = \mu_i$, $i=0,1$. Here $e_t$ denotes the usual evaluation map at time $t$.  The set of minimizers is denoted by $\OptGeo(\mu_0,\mu_1)$, and minimizers, which are always supported in $\Geo(X)$ (the set of geodesics of $(X,\d)$), are called \emph{optimal plans}. It is known that $(\mu_t)_{t\in [0,1]}$ is a geodesic connecting $\mu_0$ to $\mu_1$ if and only if there exists $\pi \in \OptGeo(\mu_0,\mu_1)$ such that $\mu_t = (e_t)_\#\pi$ (see \cite{gigli-ambrosio}).
\end{remark}

In order to recall the definition of the $\mathsf{CD}$ condition, we now recall the volume distortion coefficients:
\[
    \s_\kappa(\theta) = \begin{cases}
    \frac{1}{\sqrt{\kappa}}\sin(\sqrt{\kappa}\theta) & \text{if } \kappa > 0,\\
    \theta & \text{if }\kappa = 0,\\
    \frac{1}{\sqrt{\kappa}}\sinh(\sqrt{-\kappa}\theta) & \text{if } \kappa < 0,
    \end{cases}
\]

\[    
    \sigma^{(t)}_{K,N}(\theta) = 
    \begin{cases}
    +\infty  &   \text{if } K\theta^2 \geq N\pi^2,\\
    t   &   \text{if }K\theta^2 = 0\text{ or }K\theta^2 < 0\text{ and }N=0,\\
    \frac{\s_{K/N}(t\theta)}{\s_{K/N}(\theta)}  &   \text{if }K\theta^2<N\pi^2 \text{ and } K\theta^2\neq 0,
    \end{cases}
\]

\[
    \tau^{(t)}_{K,N}(\theta) = t^{1/N} \sigma^{(t)}_{K,N-1}(\theta)^{1-1/N}.
\]

\begin{definition}\label{def:CD}
Given parameters $K\in \R$ and $N\geq 1$, we say that $(X,\d,\m)$ satisfies the \emph{$\CD(K,N)$ condition} if for any $\mu_0,\mu_1\in \P_2(X,\d,\m)$ there exists an optimal plan $\pi\in\OptGeo(\mu_0,\mu_1)$ such that for any $t\in [0,1]$ and any $N'\geq N$
\begin{equation}\label{eq:cd condition}
\int\rho_t^{1-1/N'}(x)\ d\m(x) \geq \int\tau^{(1-t)}_{K,N'}(d(\gamma_0,\gamma_1))\rho_0^{-1/N'}(\gamma_0) +\tau^{(t)}_{K,N'}(d(\gamma_0,\gamma_1))\rho_1^{-1/N'}(\gamma_1)\ d\pi(\gamma)
\end{equation}
where $\rho_t$ is the density of the absolutely continuous part of $(e_t)_\#\pi$ with respect to $\m$.
\end{definition}

Let us also recall the definition of the reduced curvature-dimension condition $\mathsf{CD}^*$ due to Bacher-Sturm \cite{bacher-sturm}.

\begin{definition}\label{def:CDreduced}
Given parameters $K\in \R$ and $N\geq 1$, we say that $(X,\d,\m)$ satisfies the $\CD^*(K,N)$ if for any $\mu_0,\mu_1\in \P_2(X,\d,\m)$ there exists an optimal plan $\pi\in\OptGeo(\mu_0,\mu_1)$ such that for any $t\in [0,1]$ and any $N'\geq N$
\begin{equation}\label{eq:cd reduced}
\int\rho_t^{1-1/N'}(x)\ d\m(x) \geq \int\sigma^{(1-t)}_{K,N'}(d(\gamma_0,\gamma_1))\rho_0^{-1/N'}(\gamma_0) +\sigma^{(t)}_{K,N'}(d(\gamma_0,\gamma_1))\rho_1^{-1/N'}(\gamma_1)\ d\pi(\gamma)
\end{equation}
where $\rho_t$ is the density of the absolute continuous part of $(e_t)_\#\pi$ with respect to $\m$.
\end{definition}

\begin{definition}\label{def:CDloc}
Given parameters $K\in \R$ and $N>1$, we say that $(X,\d,\m)$ satisfies the $\CDloc(K,N)$ condition if each point $x\in X$ has a neighbourhood $M(x)$ such that for each $\mu_0,\mu_1\in\P_2(X,\d,\m)$ supported in $M(x)$ there exists an optimal plan $\pi\in \OptGeo(\mu_0,\mu_1)$ satisfying \eqref{eq:cd condition} for all $t\in[0,1]$ and $N'\geq N$.
\end{definition}

We will assume that Definition \ref{def:CDloc} holds outside a compact set $B\subset X$ in the following sense. 
\begin{definition}\label{def:cd outside B}
We say that $\CDloc(K,N)$ condition holds in an open set $\Omega\subset X$ if each point $x\in \Omega$ has a neighbourhood $M(x)\subset \Omega$ such that for each $\mu_0,\mu_1\in\P_2(X,\d,\m)$ supported in $M(x)$ there exists an optimal plan $\pi\in \OptGeo(\mu_0,\mu_1)$ satisfying \eqref{eq:cd condition} for all $t\in[0,1]$ and $N'\geq N$.
\end{definition}

From this point on, we will also assume that $(X,\d,\m)$ is non-branching in the following sense.
\begin{definition}\label{def:non branching}
We say that a metric space $(X,\d)$ is \emph{non-branching} if whenever we have a tuple $(z,x_0,x_1,x_2)$ such that $z$ is a midpoint of $x_0,\ x_1$ and of $x_0,\ x_2$, this implies that $x_1=x_2$.
\end{definition}

In \cite{gigli-optimal} it was proved that non-branching $\CD$ spaces have unique optimal plans in the following sense.
\begin{theorem}\label{thm:optimal maps non branching}
Let $(X,\d,\m)$ be a complete, separable and non-branching $\CD(K,N)$-space for some $K\in \R$ and $N\geq 1$. Then for any $\mu_0,\mu_1\in \P_2(X,\d,\m)$ there is a unique optimal plan $\pi \in \OptGeo(\mu_0,\mu_1)$ and this $\pi$ is induced by a map, i.e. there exists a $\mu_0$-measurable map $F:X\to \Geo(X)$ such that $\pi = F_\#\mu_0$. 
\end{theorem}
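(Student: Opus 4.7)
The plan is to follow the strategy of Gigli in \cite{gigli-optimal}, organised in three stages: absolute continuity of interpolants, extraction of a map structure from non-branching, and uniqueness as a consequence.

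First, I would show that for $\mu_0,\mu_1\in\P_2(X,\d,\m)$, \emph{every} optimal plan $\pi\in\OptGeo(\mu_0,\mu_1)$ has absolutely continuous interpolants $(e_t)_\#\pi\ll\m$ for all $t\in[0,1]$. The $\CD(K,N)$ concavity estimate \eqref{eq:cd condition} yields this property for at least one optimal plan; a standard localisation/restriction argument (any normalised restriction of an optimal plan is itself optimal between its own marginals, and the CD inequality is preserved under such restrictions) then promotes the conclusion to every optimal plan between the same marginals.

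The heart of the proof is Stage 2: extracting a map structure from non-branching. Disintegrating $\pi=\int\pi_x\,d\mu_0(x)$, the goal is that $\pi_x$ is a Dirac on $\Geo(X)$ for $\mu_0$-a.e.\ $x$. If this failed, on a Borel set $A\subset X$ of positive $\mu_0$-measure one could measurably select two distinct geodesics $\gamma^1(x),\gamma^2(x)\in\supp(\pi_x)$. Exploiting the absolute continuity of the midpoint measure $(e_{1/2})_\#\pi$ together with cyclical monotonicity of $\pi$, one engineers---via a Fubini/Lusin-type argument---a tuple $(z,x_0,x_1,x_2)$ for which $z$ is a common midpoint of $(x_0,x_1)$ and $(x_0,x_2)$ but $x_1\neq x_2$, contradicting Definition \ref{def:non branching}. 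This yields the desired measurable map $F\colon X\to\Geo(X)$ with $\pi=F_\#\mu_0$.

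Uniqueness then follows immediately: given two optimal plans $\pi_1,\pi_2\in\OptGeo(\mu_0,\mu_1)$, the convex combination $\bar\pi=\tfrac12(\pi_1+\pi_2)$ is itself optimal and hence, by Stage 2, induced by some map $\bar F$. A plan induced by a map cannot be a non-trivial convex combination of distinct plans, so $\pi_1=\pi_2=\bar F_\#\mu_0$. The main obstacle is engineering the forbidden configuration in Stage 2: the definition of non-branching used here is a priori strictly weaker than uniqueness of geodesics between two fixed endpoints, so one cannot read off a contradiction directly from the existence of two distinct geodesics sharing a starting point. The proof genuinely requires the absolute continuity of $(e_{1/2})_\#\pi$ to produce the shared midpoint $z$, and the measurable selections must be carried out with care to ensure that cyclical monotonicity forces distinct endpoints rather than merely two distinct midpoints of the same pair.
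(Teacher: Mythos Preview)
The paper does not supply its own proof of this theorem: it is quoted verbatim as a known result from \cite{gigli-optimal} (see the sentence immediately preceding the statement in Section~\ref{sec:preliminaries}), and is used only as a black box in the subsequent arguments. There is therefore nothing in the present paper to compare your proposal against.

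That said, your three-stage outline is recognisably the strategy of \cite{gigli-optimal}, and Stages~1 and~3 are accurate summaries. Your Stage~2, however, does not match the mechanism Gigli actually uses and, as written, would be hard to complete. Gigli does \emph{not} directly manufacture a forbidden midpoint tuple $(z,x_0,x_1,x_2)$ from the failure of the map property. Instead he shows, via $c$-cyclical monotonicity combined with non-branching (applied in both time directions), that for any optimal $\pi$ the evaluation $e_t$ is \emph{injective} on $\supp(\pi)$ for each $t\in(0,1)$: if $\gamma,\tilde\gamma\in\supp(\pi)$ satisfy $\gamma_t=\tilde\gamma_t$ then $\gamma=\tilde\gamma$. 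The $L^\infty$ density bound on $(e_t)_\#\pi$ (obtained in Stage~1 for bounded-density marginals) then forces $\pi$ to be concentrated on a graph over $X$, i.e.\ induced by a map. Your proposed route---selecting two distinct geodesics $\gamma^1(x),\gamma^2(x)$ from the same $x$ and hoping absolute continuity of the midpoint measure produces a \emph{shared} midpoint---runs into the difficulty that $\gamma^1_{1/2}(x)$ and $\gamma^2_{1/2}(x)$ will typically be distinct, and there is no obvious reason the midpoint measures of the two selections should overlap in the precise way needed. If you want to write out the proof rather than cite it, follow the injectivity-of-$e_t$ argument.
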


In \cite{bacher-sturm} it was proved that the $\CDloc(K,N)$ condition implies the reduced curvature-dimension condition $\CD^*(K,N)$. In a similar fashion, and emulating the arguments in \cite{bacher-sturm}, we can prove the following key result that will allow us to generalize Theorem \ref{thm:liu}. Below, we let $\P_\infty(X,\d,\m)$ denote the space of probability measures which are absolutely continuous with respect to $\m$ and have bounded support. For the next lemma, note that we cannot directly apply the local-to-global property since we are using the restricted metric on $X\setminus B$ and this might not be a geodesic space unless $X\setminus B$ is geodesically convex, for example. However, the proof of \cite[Theorem 5.1]{bacher-sturm} applies verbatim as we are assuming that all the measures involved are connected by a geodesic in $\P_\infty(X,\d,\m)$ {\footnote{In fact the proof of \cite[Claim 5.2]{bacher-sturm} simplifies in our case as by Theorem \ref{thm:optimal maps non branching} the geodesic joining $\mu_0$ and $\mu_1$ is unique, so there is no need to construct the sequence $\Gamma^{(i)}$}}.

\begin{lemma}\label{lem:key lemma}
Assuming the hypotheses from Theorem \ref{thm:principal}, if $\mu_0,\mu_1\in\P_\infty(X,\d,\m)$ are supported on $X\setminus B$ and the optimal plan $\pi\in\OptGeo(\mu_0,\mu_1)$ given by Theorem \ref{thm:optimal maps non branching} is supported in geodesics contained in $X\setminus B$, then $\pi$ satisfies the condition \eqref{eq:cd reduced} for $K=0$ and for all $t\in[0,1]$ and $N'\geq N$.
\end{lemma}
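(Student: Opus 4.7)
The plan is to imitate the local-to-global argument of \cite[Theorem 5.1, Claim 5.2]{bacher-sturm}, restricted to the single geodesic $(\mu_t)_{t\in[0,1]}$ in $\P_2(X,\d,\m)$ with $\mu_t := (e_t)_\# \pi$. A useful preliminary remark is that since $K=0$, the coefficients $\tau^{(t)}_{0,N'}$ and $\sigma^{(t)}_{0,N'}$ both equal $t$, so proving \eqref{eq:cd reduced} with $K=0$ along $\pi$ is the same as proving \eqref{eq:cd condition} with $K=0$ along $\pi$; in particular we can freely use local $\CD(0,N)$ inequalities as inputs and obtain a $\CD^*(0,N)$ inequality as output.

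The first step is to ensure that all sub-transports extracted from $\pi$ remain in $X\setminus B$. By non-branching together with Theorem \ref{thm:optimal maps non branching}, for every $0\le s<t\le 1$ the unique element of $\OptGeo(\mu_s,\mu_t)$ is obtained from $\pi$ by restriction and linear rescaling of its geodesics; its supporting curves are thus reparametrizations of sub-arcs of geodesics in $\supp(\pi)$, and so lie in $X\setminus B$. The compact set $K_\pi:=\bigcup_{\gamma\in\supp(\pi)}\gamma([0,1])\subset X\setminus B$ can therefore be covered by finitely many neighborhoods $M(y_1),\dots,M(y_m)\subset X\setminus B$ provided by Definition \ref{def:cd outside B}, with some Lebesgue number $2\eta>0$.

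Next I would pick a partition $0=t_0<t_1<\dots<t_L=1$ fine enough that every arc $\gamma([t_i,t_{i+1}])$ with $\gamma\in\supp(\pi)$ has diameter less than $2\eta$, which is possible because the speeds of the geodesics in $\supp(\pi)$ are uniformly bounded. Each such arc then fits inside some $M(y_j)$, and grouping the geodesics in $\supp(\pi)$ by which neighborhood contains each of their sub-arcs produces a Borel decomposition $\pi=\sum_\alpha c_\alpha\pi^\alpha$ with the property that for each $\alpha$ and each $i$ all geodesics in $\supp(\pi^\alpha)$ have their $[t_i,t_{i+1}]$-arcs inside a single $M(y_{j(\alpha,i)})$. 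Applying the local $\CD(0,N)$ hypothesis to the marginals $(e_{t_i})_\#\pi^\alpha$ and $(e_{t_{i+1}})_\#\pi^\alpha$ gives the inequality \eqref{eq:cd condition} with $K=0$ on $[t_i,t_{i+1}]$ for $\pi^\alpha$; uniqueness in Theorem \ref{thm:optimal maps non branching} is exactly what forces the plan selected by the local hypothesis to coincide with the sub-plan coming from $\pi^\alpha$ itself.

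The main obstacle, and the content of \cite[Claim 5.2]{bacher-sturm}, is combining these inequalities on a fine subdivision of $[0,1]$ into a single inequality on the full interval between $\mu_0$ and $\mu_1$. The induction argument of Bacher and Sturm applies verbatim; for $K=0$ it simplifies considerably because the multiplicative identities for $\sigma^{(t)}_{K,N'}$ collapse to the tautology $s=(s/t)\cdot t$, and, as the footnote in the excerpt already observes, the uniqueness given by Theorem \ref{thm:optimal maps non branching} removes the need for the approximating sequence $\Gamma^{(i)}$ used in \cite{bacher-sturm}. Summing over $\alpha$ using concavity of $r\mapsto r^{1-1/N'}$ then recovers \eqref{eq:cd reduced} with $K=0$ for $\pi$ and every $N'\ge N$.
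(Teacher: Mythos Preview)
Your proposal is correct and follows exactly the route the paper indicates: the paper does not give a self-contained proof but simply asserts that \cite[Theorem~5.1]{bacher-sturm} applies verbatim (with the simplification, noted in the footnote, that uniqueness from Theorem~\ref{thm:optimal maps non branching} makes the approximating sequence $\Gamma^{(i)}$ unnecessary), and you have unpacked precisely that argument---the compactness of the transported region inside $X\setminus B$, the Lebesgue-number cover by the neighborhoods $M(y_j)$, the fine partition of $[0,1]$, the local $\CD(0,N)$ input on each sub-interval, and the Bacher--Sturm gluing step. Your additional observation that $\tau^{(t)}_{0,N'}=\sigma^{(t)}_{0,N'}=t$ is a helpful clarification of why the $\CD$ and $\CD^*$ formulations coincide here.
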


Now we can follow the arguments in \cite{sturm2} to prove a generalized Bishop-Gromov result for star-shaped sets outside a compact set. To this end, we need the following modified version of the Brunn-Minkowski inequality.

\begin{theorem}[Brunn–Minkowski inequality]\label{thm:brunn-minkowski}
Let $(X,\d,\m)$ be a metric measure space and $\Omega\subset X$ an open set such that $\CDloc(K,N)$ condition holds in $\Omega$. Then, for all measurable sets $A_0,A_1 \subset \Omega$ such that $\m(A_0)\m(A_1)>0$ and $A_t\subset \Omega$ for all $t\in [0,1]$,
\begin{equation}
\m(A_t)^{1/N'}\geq \sigma^{(1-t)}_{K,N'}(\theta)\m(A_0)^{1/N'}+\sigma^{(t)}_{K,N'}(\theta)\m(A_1)^{1/N'},
\end{equation}
holds for all $t\in [0,1]$ and all $N'\geq N$, where $A_t$ denotes the set of points which divide geodesics starting in $A_0$ and ending in $A_1$ with ratio $t:(1-t)$ and where $\theta$ denotes the minimal/maximal length of such geodesics, that is,
\[
A_t := \{y\in X : \exists (x_0 ,x_1 )\in A_0 \times A_1 \text{ s.t. }\d(y,x_0)=t\d(x_0 ,x_1 ),\ \d(y,x_1 )=(1-t)\d(x_0 ,x_1 )\}
\]
and
\[
\theta:=\begin{cases}
\inf_{x_0 \in A_0,x_1\in A_1} \d(x_0 ,x_1 ), & \text{if } K \geq 0,\\
\sup_{x_0 \in A_0,x_1\in A_1} \d(x_0 ,x_1 ), & \text{if } K < 0.
\end{cases}
\]
In particular, if $K\leq 0$ then
\[
\m(A_t)^{1/N'}\geq (1-t)\m(A_0)^{1/N'}+t\m(A_1)^{1/N'}
\]
\end{theorem}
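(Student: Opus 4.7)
My plan is to adapt Sturm's proof of the Brunn--Minkowski inequality \cite[Theorem 2.3]{sturm2}, with the single modification that every optimal transport used in the argument must remain inside the region $\Omega$, where the local curvature-dimension condition is in force. The overall strategy is to normalize $A_0$ and $A_1$ to probability measures, transport one onto the other along the unique optimal plan, use a localized analog of Lemma \ref{lem:key lemma} to upgrade $\CDloc(K,N)$ to the reduced condition \eqref{eq:cd reduced} for this particular plan, and then extract the desired inequality via Jensen (or H\"older).

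In more detail, after reducing to the case $\m(A_0), \m(A_1) < \infty$ by approximating from within, I would form the probability measures $\mu_i := \m(A_i)^{-1}\m|_{A_i}$, $i = 0,1$, which lie in $\P_\infty(X,\d,\m)$ and are supported in $\Omega$. Theorem \ref{thm:optimal maps non branching} gives a unique $\pi \in \OptGeo(\mu_0,\mu_1)$, and by the very definition of $A_t$, for $\pi$-a.e.\ $\gamma$ one has $\gamma_t \in A_t \subset \Omega$ for every $t \in [0,1]$; hence $\supp(\pi)$ consists of geodesics entirely contained in $\Omega$. The proof of \cite[Theorem 5.1]{bacher-sturm}, applied verbatim in this restricted setting as in Lemma \ref{lem:key lemma} but with $X \setminus B$ replaced by $\Omega$ and $0$ replaced by $K$, then yields that $\pi$ satisfies \eqref{eq:cd reduced} for every $N' \geq N$.

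Substituting $\rho_i = \m(A_i)^{-1}1_{A_i}$ gives $\rho_i^{-1/N'}(\gamma_i) = \m(A_i)^{1/N'}$ for $\pi$-a.e.\ $\gamma$, and the monotonicity of $\theta \mapsto \sigma^{(t)}_{K,N'}(\theta)$ (increasing for $K \geq 0$, decreasing for $K < 0$), combined with the correspondingly chosen infimum or supremum in the definition of $\theta$, produces a uniform lower bound that factors the constants out of the integral against $\pi$ and yields
\[
\int \rho_t^{1-1/N'}\,d\m \geq \sigma^{(1-t)}_{K,N'}(\theta)\,\m(A_0)^{1/N'} + \sigma^{(t)}_{K,N'}(\theta)\,\m(A_1)^{1/N'}.
\]
On the left-hand side, H\"older's inequality applied to $\rho_t^{1-1/N'} \cdot 1_{A_t}$ with exponents $p = N'/(N'-1)$ and $q = N'$, using $\int \rho_t\,d\m = 1$ and $\supp(\rho_t) \subset A_t$, delivers $\int \rho_t^{1-1/N'}\,d\m \leq \m(A_t)^{1/N'}$. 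Combining the two bounds gives the main Brunn--Minkowski estimate, and the special form stated as the ``in particular'' case follows (at least for $K = 0$) directly from $\sigma^{(t)}_{0,N'}(\theta) = t$.

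The main obstacle will be justifying carefully that the $\CDloc \Rightarrow \CD^*$ step goes through verbatim in this localized setting: one has to verify that the iterative gluing of local curvature-dimension inequalities in the Bacher--Sturm argument is always applied to measures whose supports and whose optimal plans lie inside $\Omega$. Uniqueness of the optimal plan coming from non-branching (Theorem \ref{thm:optimal maps non branching}) together with the standing hypothesis $A_t \subset \Omega$ for every $t \in [0,1]$ are precisely what make this possible, since they guarantee that the single $\pi$ being analyzed is concentrated on geodesics that never leave the region where the $\CDloc$ assumption is available.
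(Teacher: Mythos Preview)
Your proposal is correct and follows essentially the same route as the paper: reduce to finite measure, normalize $A_0,A_1$ to probability measures $\mu_i$, invoke (the obvious generalization of) Lemma~\ref{lem:key lemma} to obtain the reduced inequality \eqref{eq:cd reduced} for the unique optimal plan between them, and then run Sturm's argument from \cite[Proposition~2.1]{sturm2} with $\sigma$-coefficients in place of $\tau$-coefficients. The paper's proof is simply a compressed version of the steps you spell out; your care in noting that Lemma~\ref{lem:key lemma} must be used with $\Omega$ in place of $X\setminus B$ and with general $K$ is appropriate, since the paper states the lemma only in the special case but tacitly applies it in this generality.
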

\begin{proof} 
Assuming that $0<\m(A_0)\m(A_1)<\infty$, and thanks to Theorem \ref{lem:key lemma}, we can apply $\CD^*(K,N)$ to $\mu_i := (1/\m(A_i))1_{A_i}\m$ for $i=0,1$ and proceed as in \cite[Proposition 2.1]{sturm2}, just replacing the coefficients $\tau^{(t)}_{K,N'}(\cdot)$ by $\sigma^{(t)}_{K,N'}(\cdot)$. The general case follows by approximation of $A_i$ by sets of finite volume.
\end{proof}

Recall that, given a metric space $(X,\d)$ and $x\in X$, it is said that $W_x\subset X$ is \emph{star-shaped} at $x$ if $x\in W_x$ and for any $y\in W_x$ not in the cut-locus of $x$, the minimal geodesic joining $x$ and $y$ is contained in $W_x$. In that case, we set
\[
v(W_x,r)=\m(\overline{B}_r(x)\cap W_x) \quad{and}\quad
s(W_x,r)=\limsup_{\eta \to 0} \m((\overline{B}_{r+\eta}(x)\setminus B_r(x))\cap  W_x)
\]

\begin{theorem}[Bishop-Gromov theorem for star-shaped sets]\label{thm:bishop-gromov}
Let $(X,\d,\m)$ be a metric measure space and $\Omega\subset X$  an open set such that $\CDloc(K,N)$ condition holds in $\Omega$. Let $x\in\Omega$ and $W_x\subset \Omega$ be a star-shaped set at $x$ such that for some $\epsilon_0>0$ every geodesic connecting points in $B_{\epsilon_0}(x)$ with points in $W_x$ is contained in $\Omega$. Then 
\[
\frac{v(W_x,r)}{v(W_x,R)}\geq \left(\frac{\s_{K/N}(r)}{\s_{K/N}(R)}\right)^{N}\quad \text{and}\quad \frac{s(W_x,r)}{s(W_x,R)}\geq \frac{\int_0^r \s_{K/N}(t)^N\ dt}{\int_0^R \s_{K/N}(t)^N\ dt}
\]
for all $0<r\leq R\leq \mathrm{rad}_x(W_x):=\sup\{\d(x,y):y\in W_x\}$
\end{theorem}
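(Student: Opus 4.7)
The plan is to apply the Brunn--Minkowski inequality of Theorem \ref{thm:brunn-minkowski} to carefully chosen pairs of sets, following the template of \cite[Theorem 2.3]{sturm2}. The role of the hypothesis on $W_x$---that every geodesic connecting a point of $B_{\epsilon_0}(x)$ with a point of $W_x$ lies in $\Omega$---is precisely to guarantee that all the $t$-intermediate sets needed for the Brunn--Minkowski inequality lie in $\Omega$, so Theorem \ref{thm:brunn-minkowski} can indeed be invoked.

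\emph{Volume comparison.} Fix $0 < r \leq R \leq \mathrm{rad}_x(W_x)$ and put $t = r/R$. For $\epsilon \in (0,\epsilon_0)$, set $A_0 = B_\epsilon(x)$ and $A_1 = \overline{B}_R(x)\cap W_x$. Theorem \ref{thm:brunn-minkowski} applies and yields
\[
\m(A_t)^{1/N'} \geq \sigma^{(1-t)}_{K,N'}(\theta_\epsilon)\,\m(A_0)^{1/N'} + \sigma^{(t)}_{K,N'}(\theta_\epsilon)\,\m(A_1)^{1/N'},
\]
where $\theta_\epsilon \to R$ as $\epsilon \to 0$. A triangle-inequality estimate places $A_t$ in $\overline{B}_{tR + O(\epsilon)}(x)$, and in the limit $\epsilon \to 0$, non-branching together with star-shapedness forces $A_t$ to lie in $W_x$ up to a negligible set: the geodesics from $x_0 \in B_\epsilon(x)$ to $x_1 \in A_1$ stabilize to the unique geodesic from $x$ to $x_1$, which lies in $W_x$ by star-shapedness. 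Hence $\m(A_t) \leq v(W_x, r) + o(1)$. Letting $\epsilon \to 0$ makes $\m(A_0) \to 0$, so
\[
v(W_x, r)^{1/N'} \geq \sigma^{(t)}_{K,N'}(R)\,v(W_x, R)^{1/N'} = \frac{\s_{K/N'}(r)}{\s_{K/N'}(R)}\,v(W_x, R)^{1/N'}.
\]
Raising to the $N'$-th power and taking $N' = N$ yields the first inequality.

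\emph{Surface comparison.} The second inequality follows from the analogous argument with $A_1 = (\overline{B}_{R+\eta}(x) \setminus B_R(x)) \cap W_x$: $A_t$ then concentrates near the corresponding shell of radius $tR$ inside $W_x$, and after sending $\epsilon \to 0$ one divides by $\eta$ and takes $\eta \to 0$ to extract the surface densities $s(W_x, \cdot)$; an integration in the outer radius then produces the integral form appearing on the right-hand side.

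The main technical obstacle is the limiting identification $\m(A_t) \leq v(W_x, r) + o(1)$ as $\epsilon \to 0$: one must verify that the portion of $A_t$ lying outside $W_x$ or beyond radius $r$ has vanishing measure in the limit. This requires non-branching (which ensures stability of geodesics under small perturbations of endpoints) together with star-shapedness (which places the limiting geodesics back in $W_x$). Once this step is in place, the remaining manipulations with $\sigma^{(t)}_{K,N'}$ and the passage from the volume to the surface inequality are standard.
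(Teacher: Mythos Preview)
Your overall strategy---apply the Brunn--Minkowski inequality of Theorem \ref{thm:brunn-minkowski} with $A_0$ a small ball about $x$ and $A_1$ a piece of $W_x$, then send $\epsilon\to 0$---is exactly the template the paper uses. But there is a concrete error in your volume argument, coming from your choice of $A_1$.

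You take $A_1=\overline{B}_R(x)\cap W_x$, the full intersected ball. Since $x\in W_x$ and $\d(x,x)=0\le R$, we have $x\in A_0\cap A_1$. Consequently, for $K\ge 0$ the parameter $\theta$ in Theorem \ref{thm:brunn-minkowski} is
\[
\theta=\inf_{x_0\in A_0,\ x_1\in A_1}\d(x_0,x_1)=0,
\]
and not $R$; your assertion ``$\theta_\epsilon\to R$'' is therefore false. Brunn--Minkowski then only produces the coefficient $\sigma^{(t)}_{K,N'}(0)=t$, not $\sigma^{(t)}_{K,N'}(R)=\s_{K/N'}(r)/\s_{K/N'}(R)$, and you can conclude at best $v(W_x,r)/v(W_x,R)\ge (r/R)^{N}$, which is strictly weaker than the claimed bound when $K>0$.

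The remedy is precisely what the paper (following \cite{sturm2}) does: take $A_1$ to be the \emph{shell} $(\overline{B}_{(1+\eta)R}(x)\setminus B_R(x))\cap W_x$ rather than the full ball. Then every point of $A_1$ lies at distance at least $R$ from $x$, so the $\theta$ in Brunn--Minkowski is genuinely $\approx R$ and one gets the sharp coefficient on shells. The volume inequality is then obtained by the standard integration (Gromov's lemma), not directly. Your surface-comparison paragraph already uses the correct shell $A_1$, so you are close: reverse the order---prove the shell inequality first, then integrate to get the volume inequality---and discard the direct full-ball attempt. (Incidentally, the paper carries out the computation only for $K=0$, where $\sigma^{(t)}_{0,N}(\theta)\equiv t$ and the $\theta$ issue is invisible; this is all that is needed for the downstream application.) Your identification of the step ``$\m(A_t)\le v(W_x,r)+o(1)$'' as the delicate point is accurate; the paper handles the analogous containment for shells in the same limiting fashion and defers the details to \cite{sturm2}.
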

\begin{proof}
Let $0<r\leq R\leq \mathrm{rad}_x(W_x)$, $0<\epsilon\leq \epsilon_0$ and $\eta>0$ and let 
\[
A_0 := B_{\epsilon} (x)\cap W_x\quad \text{and}\quad A_1 := (\overline{B}_{(1+\eta)R} (x)\setminus B_{R} (x))\cap W_x.
\]
In particular, the $(r/R)$-intermediate set $A_{r/R}$ between $A_0$ and $A_1$ is contained in $\Omega$, so we can apply Theorem \ref{thm:brunn-minkowski} with $K=0$ to get
\[
\m(A_{r/R})^{1/N} \geq (1-r/R)\m(A_0)^{1/N}+(r/R)\m(A_1)^{1/N}.
\]
We can take $\epsilon \to 0$, which yields
\[
\m((\overline{B}_{(1+\eta)r}(x)\setminus B_r(x))\cap W_x)\geq (r/R)^{N}\m((\overline{B}_{(1+\eta)R} (x)\setminus B_{R} (x))\cap W_x).
\]
We thus can conclude just as in the proof of \cite[Theorem 2.3]{sturm2}. 
\end{proof}

\section{Proofs}
\label{sec:proofs}

In this section we proceed to prove Theorem \ref{thm:principal}. The proof follows almost verbatim the arguments in \cite{liu2}. For the convenience of the reader, we elaborate on this argument and stress the needed changes due to the more general hypotheses. We proceed with the following technical Lemma assuming the hypothesis of Theorem \ref{thm:principal}. 

\begin{lemma}\label{lem:delta}
Assume that $p\in X$ and $R>0$ are such that $B_{2R}(p)\subset X\setminus B_{2D}(p_0)$. Then for every $m>0$ there exists $\delta=\delta(m) \in (0,1)$ such that whenever a subset $W\subset B_R(p)$ satisfies
\begin{equation}\label{eq:delta}
\m(W) \geq \frac{1}{m}\cdot \m(B_R(p))
\end{equation}
then there exists $q\in W$ such that $\d(q,p) \leq \delta R$, and in particular $B_{(1-\delta)R}(q)\subset B_R(p)$.
\end{lemma}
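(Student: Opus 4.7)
The strategy is to argue by contradiction using the star-shaped Bishop–Gromov inequality of Theorem~\ref{thm:bishop-gromov} with $K=0$, applied to the ball $W_p := B_R(p)$, which is a star-shaped subset of $\Omega := X\setminus B$ at $p$. If the conclusion failed, one would have $W \subset B_R(p) \setminus \overline{B}_{\delta R}(p)$, and a volume bound on this annulus would force $\m(W) < \frac{1}{m}\m(B_R(p))$ once $\delta$ is sufficiently close to $1$, contradicting~\eqref{eq:delta}.

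First I would verify the hypotheses of Theorem~\ref{thm:bishop-gromov} with $\Omega = X\setminus B$, $x=p$, and $W_p = B_R(p)$. Since $(X,\d)$ is geodesic, $B_R(p)$ is star-shaped at $p$. For the $\epsilon_0$-condition, fix any $\epsilon_0 < R/2$; for a geodesic $\gamma$ joining $a\in B_{\epsilon_0}(p)$ to $b\in B_R(p)$, the triangle inequality gives $\d(\gamma_t,p) \leq \d(a,p) + \d(a,b) \leq 2\epsilon_0 + R < 2R$, so $\gamma\subset B_{2R}(p) \subset X\setminus B_{2D}(p_0) \subset \Omega$. Noting that $\s_{0/N}(t)=t$, and that for $\delta<1$ we have $v(B_R(p),\delta R) = \m(\overline{B}_{\delta R}(p))$ while $v(B_R(p),R) = \m(B_R(p))$, Theorem~\ref{thm:bishop-gromov} applied at radii $r=\delta R$ and $R$ yields
\[
\m(\overline{B}_{\delta R}(p)) \geq \delta^N\, \m(B_R(p)).
\]

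Now suppose for contradiction that every $q\in W$ satisfies $\d(q,p) > \delta R$. Then $W \subset B_R(p) \setminus \overline{B}_{\delta R}(p)$, and the above estimate yields
\[
\m(W) \leq \m(B_R(p)) - \m(\overline{B}_{\delta R}(p)) \leq (1 - \delta^N)\m(B_R(p)).
\]
Hence it suffices to pick $\delta = \delta(m)\in(0,1)$ with $\delta^N > 1 - 1/m$ (possible whenever $m\geq 1$; for $m<1$ the lemma is vacuous as then $\frac{1}{m}\m(B_R(p)) > \m(B_R(p)) \geq \m(W)$ forbids the hypothesis) to contradict~\eqref{eq:delta}. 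The existence of $q\in W$ with $\d(q,p)\leq \delta R$ follows, and $B_{(1-\delta)R}(q)\subset B_R(p)$ is immediate from the triangle inequality.

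The only subtle point is verifying the $\epsilon_0$-hypothesis of Theorem~\ref{thm:bishop-gromov}, which ensures the optimal transports used in its proof remain in the $\CDloc(0,N)$-region; the rest is a routine volume-comparison argument.
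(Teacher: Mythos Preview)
Your proposal is correct and follows essentially the same argument as the paper's proof: both verify that geodesics between $B_{\epsilon_0}(p)$ and $B_R(p)$ remain in $B_{2R}(p)\subset X\setminus B$, apply Theorem~\ref{thm:bishop-gromov} with $K=0$ to obtain $\m(\overline{B}_{\delta R}(p))\geq \delta^N\m(B_R(p))$ (equivalently, the annulus has measure at most $(1-\delta^N)\m(B_R(p))$), and then choose $\delta$ so that $1-\delta^N<1/m$ to force $W\cap\overline{B}_{\delta R}(p)\neq\varnothing$. The paper fixes the explicit value $\delta=(1-1/(2m))^{1/N}$, which is one valid choice in your range $\delta^N>1-1/m$.
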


\begin{proof} 
Let $\delta\in(0,1)$ and $W\subset B_R(p)\setminus\overline{B}_{\delta R}(p)$ be such that \eqref{eq:delta} holds. Since $B_{2R}(p)\cap B_{2D}(p_0)=\varnothing$ then the optimal plan between any two probability measures supported on $B_{(1+\eta)R}(p)$ for sufficiently small $\eta>0$ is concentrated in geodesics outside $B_{D}(p_0)$. Therefore, applying Theorem \ref{thm:bishop-gromov} yields
\begin{align*}
\frac{1}{m}\leq \frac{\m(W)}{\m(B_R(p))}\leq \frac{\m(B_R(p)\setminus B_{\delta R}(p))}{\m(B_R(p))}\leq \frac{\int_{\delta R}^R t^{N-1}\ dt}{\int_0^{R} t^{N-1}\ dt} = 1-\delta^N,
\end{align*}
that is, $\delta\leq (1-1/m)^{1/N}$. Thus, if we take $\delta = (1-1/(2m))^{1/N}$ and any $W$ satisfying \eqref{eq:delta}, then $W\cap \overline{B}_{\delta R}(p) \neq \varnothing$, i.e there is some $q\in W$ such that $\d(q,p)\leq \delta R$. In particular, for such $q\in W$ and any $x\in B_{(1-\delta)R}(q)$ we have
\[
\d(x,p) \leq \d(x,q)+\d(q,p) < (1-\delta)R+\delta R = R,
\] so $B_{(1-\delta)R}(q)\subset B_R(p)$.
\end{proof}

\begin{proof}[Proof of Theorem \ref{thm:principal}]
Clearly we can assume that $K<0$. Moreover, by rescaling the metric in $X$ by $\sqrt{-K}$, we can further assume that $(X,\d,\m)$ satisfies $\CD(-1,N)$ condition. In particular, we get that $B\subset B_{D}(p_0)$ where $D = \sqrt{-K}D_0$.

For $\mu>2$ the result follows from the fact that $\ov{B}_{r}(p_0)\subset B_{\mu\cdot r}(p)$ for any $p\in\ov{B}_{r}(p_0)$, so we can set $C(N,KD_0^2,\mu)=1$. Therefore, we will assume from now on that $0<\mu\leq 2$. 

We now divide $S$ into the union $S_1$ and $S_2$ where
\[
S_1 = S\cap B_{\mu r/2}(p_0),\quad S_2 = S\setminus S_1. 
\]
If $S_1\neq \varnothing$ then it can be covered by just one $B_{\mu r}(p)$ with $p\in S_1$. In any case, we only need to estimate the covering number of $S_2$. We will actually estimate the number of $(\mu r/4)$-balls needed to cover $S_2$, so for simplicity let us denote $t=\mu/4$.

Now, fix some $\lambda >2$. The case when $tr\leq \lambda  D$  follows exactly as in \cite[Page 11]{liu2} (where instead of $\lambda$ it suffices to consider $2$). Therefore we assume that $tr > \lambda D$. In particular, for $q \in S_2$,
\[
B_{tr}(q)\cap B_{\lambda D}(p_0) = \varnothing.
\]

Write $\partial B_{\lambda D}(p_0)$ as the union of subsets $\{U_1,\dots,U_m\}$ such that $\d(x,y)<2D$ for any $x,y\in U_a$. This can be done as follows. Take a maximal set of points $\{q_1,\dots,q_m\} \subset \partial B_{\lambda D}(p_0)$ such that $\d(q_a,q_b)\geq D$, $a\neq b$. Then
\begin{equation*}
\begin{gathered}
\partial B_{\lambda D}(p_0)\subset \bigcup_{j=1}^{m}B_{D}(p_0),\\
B_{D/2}(q_a)\cap B_{D/2}(q_b) = \varnothing,\quad a\neq b.
\end{gathered}
\end{equation*}

Suppose $B_{D/2}(q_s)$ has the smallest volume among all $B_{D/2}(q_j)$. Since $\bigcup_{i=1}^{m}B_{D/2}(q_j)\subset B_{\left(1/2+2\lambda \right)D}(q_s)$, Bishop-Gromov comparison corresponding to the condition $\CD(-1,N)$ yields
\begin{equation}\label{eq:bound for m}
m<\frac{V^{-1}((1/2+2\lambda)D)}{V^{-1}(D/2)}.
\end{equation}
Note that the right-hand side of \eqref{eq:bound for m} depends only on $n$ and $D$. We define
\[
U_a = B_{D}(q_a) \cap \partial B_{\lambda  D}(p_0), \quad a =1,\dots, m.
\]

Let $M_r$ be the subset of $M$ consisting of all points on any minimal geodesic emanating from $p_0$ that is no shorter than $r$. Note that $M-B_{r}(p_0)\subset M_r$, and $M_r$ is star-shaped at $p_0$.

We now divide $M_{\lambda  D}$ into $m$ cones $K_a$ by defining $K_a$ to be the subset consisting of all points on any minimal geodesic emanating from $p_0$ that intersects $U_a$. Observe that, by the triangle inequality, if $d(x_i,p_0)>\lambda  D$, $x_i\in K_a$, $i = 1,2$, then any minimal geodesic connecting $x_1$ and $x_2$ will not pass through $B_{\lambda  D/2}(p_0)$. Indeed, let $\gamma_i$ be a minimal geodesic from $p_0$ to $x_i$ with $\gamma_i(\lambda D) \in U_a$, $i =1,2$. Then the broken geodesic from $x_1$ to $\gamma_1(\lambda D)$ to $\gamma_2(\lambda D)$ to $x_2$ has length no greater than
\[
d(x_1,p_0)+d(x_2,p_0)-2\lambda D+2D.
\]
On the other hand, if a minimal geodesic connecting $x_1$ and $x_2$ intersects $B_{\lambda  D/2}(p_0)$, then it would have a length greater than
\[
d(x_1,p_0)+d(x_2,p_0)-\lambda D,
\] which is a contradiction.

Now we can estimate the covering number just as in \cite{liu1,liu2}. For the  convenience of the reader, we will repeat some of the constructions.

Take a maximal set of points $\{p_1,\dots, p_k\}$ in $S_2$ such that $\d(p_i,p_j)>tr$, $i\neq j$. Then
\begin{equation*}
\begin{gathered}
S_2\subset \bigcup_{j} B_{tr}(p_j),\\
B_{tr/2}(p_i)\cap B_{tr/2}(p_j) = \varnothing,\quad i \neq j.
\end{gathered}
\end{equation*}
We then divide the points $p_j$ into $m$ families as follows: for each ball $B_{tr/2}(p_j)$, look at $\m(B_{tr/2}(p_j)\cap K_a)$, $a =1,\dots, m$. Fix an $a_j$ such that $\m(B_{tr/2}(p_j)\cap K_{a_j})$ is maximal. Then
\begin{equation}
\m(B_{tr/2}(p_j) \cap K_{a_j}) \geq \frac{1}{m}\m(B_{tr/2}(p_j)).
\end{equation}
We denote
\[
B_{p_j}^{L,a_j}:= B_{tr/2}(p_j)\cap K_{a_j},
\]
and place $p_j$ in the $a_j$-th family, call it $\F_{a_j}$. Fix a $K_{a}$. Suppose $B^{L,a}_{p}$ has the smallest volume among all $B^{L,a}_{p_j}$ in this cone. By Lemma \ref{lem:delta}, we can find a $q\in B^{L,a}_p$ such that
\[
B_{(1-\delta)tr/2}(q)\subset B_{tr/2}(p).
\]
Let $W_q$ be the star-shaped set such that $y \in W_q$ if and only if there is a point $x$ belonging to either $B_{(1-\delta)tr/2}(q)$ or $B^{L,a}_{p_j}$ for some $p_j\in \F_a$ and there is a minimal geodesic $\gamma$ connecting $q$ and $x$ which passes $y$. 

Observe that for $\epsilon_0 = \min\{(1-\delta)tr,(\lambda -2)D/2\}>0$, $z\in B_{\epsilon_0}(q)$ and $y\in W_q$, any geodesic joining $z$ with $y$ is outside $B_{\lambda D/2}(p_0)$. Indeed, if $x\in B_{(1-\delta)tr/2}(q)\cup \bigcup B^L_{p_j}$ is such that $y$ is in a geodesic $\gamma$ joining $q$ with $x$ and $\gamma_1$ is a geodesic joining $z$ with $y$ and passing through $B_{\lambda  D/2}(p_0)$ then the broken geodesic from $q$ to $z$ to $y$ to $x$ will have length greater than 
\[
\d(q,p_0)+\d(x,p_0)-\lambda D.
\]
However, it also has length no greater than
\[
\epsilon_0+\epsilon_0+\d(q,y)+\d(y,x)=2\epsilon_0+L(\gamma)\leq 2\epsilon_0+\d(q,p_0)+\d(x,p_0)-2\lambda D+2D
\]
which is a contradiction.

By a simple triangle inequality we get that $\d(q,y)\leq (2+t)r$ for all $y\in W_q$. Therefore, applying Theorem \ref{thm:bishop-gromov} with $K=0$, we get
\[
\frac{\m(W_q)}{\m(B_{(1-\delta)tr/2}(q))}\leq 2^N(2+t)^N(1-\delta)^{-N}t^{-N}
\] However,
\[
\frac{\m(W_q)}{\m(B_{(1-\delta)tr/2}(q))}\geq \sum_{p_j\in \F_a}\frac{\m(B^{L,a}_{p_j})}{\m(B_{tr/2}(p))}\geq \frac{\# \F_a}{m}
\] thus we get
\[
\#\F_a \leq 2^N(2+t)^N(1-\delta)^{-N}t^{-N}m.
\] Adding up the contributions from the $m$ families $\F_a$, we get that 
\begin{equation}\label{eq:k}
k \leq 2^N(2+t)^N(1-\delta)^{-N}t^{-N}m^2.
\end{equation}
The right hand side of \eqref{eq:k} depends on $N$, $\mu$ and $m$. However $m$ is bounded above by the right hand side of \eqref{eq:bound for m}, which is a function depending on $KD_0^2$ and $\lambda$, increasing with respect to $\lambda$. Taking $\lambda\searrow 2$, we get the required constant $C= C(N,KD_0^2,\mu)>0$.  
\end{proof}

\begin{definition}
\label{def:ends}
Let $(X, \d)$ be a metric space and $k \in \N$. We say that $X$ has $k$ ends if both the following are true:
\begin{enumerate}
\item for any $K$ compact, $X \setminus K$ has at most $k$ unbounded connected components,
\item there exists $K'$ compact such that $X \setminus K'$ has exactly $k$ unbounded connected components.
\end{enumerate}
\end{definition}

\begin{proof}[Proof of Corollary \ref{cor:principal}] 
If the result does not hold true, we take $r$ large enough so that $X\setminus \overline{B}_r(p_0)$ has $n > C(N,KD_0^2,1/2)$ unbounded connected components. It is clear that each such unbounded connected component $E$ requires at least one ball of radius $r$ to cover $E\cap \partial \overline{B}_{2r}(p_0)$. This contradicts Theorem \ref{thm:principal}.
\end{proof}



\end{document}